\def\eqalign#1{\,\vcenter{\openup\jot\m@th
  \ialign{\strut\hfil$\displaystyle{##}$&$\displaystyle{{}##}$\hfil
      \crcr#1\crcr}}\,}
\def\eqalignno#1{\displ@y \tabskip\@centering
  \halign to\displaywidth{\hfil$\displaystyle{##}$\tabskip\z@skip
    &$\displaystyle{{}##}$\hfil\tabskip\@centering
    &\llap{$##$}\tabskip\z@skip\crcr
    #1\crcr}}
\def\leqalignno#1{\displ@y \tabskip\@centering
  \halign to\displaywidth{\hfil$\displaystyle{##}$\tabskip\z@skip
    &$\displaystyle{{}##}$\hfil\tabskip\@centering
    &\kern-\displaywidth\rlap{$##$}\tabskip\displaywidth\crcr
    #1\crcr}}
\newdimen\pixel \pixel=.00333333 true in
\definecolor{light-gray}{gray}{0.3}
\def\bigpar{\bigbreak\@afterindentfalse\@afterheading\ignorespaces}
\def\medpar{\medbreak\@afterindentfalse\@afterheading\ignorespaces}
\def\smallpar{\smallbreak\@afterindentfalse\@afterheading\ignorespaces}
\newlength{\saveindent}
\newenvironment{proof}%
      {\bigpar{\bf Proof:}\ %  previously \sentsp rather than \
             \setlength{\saveindent}{\parindent} %\setlength{\parindent}{0pt}%
                       \ignorespaces}%
\bigbreak \setlength{\parindent}{\saveindent}}
\bigbreak \setlength{\parindent}{\saveindent}}
\bigbreak \setlength{\parindent}{\saveindent}}
\newenvironment{proofof}[1]%
      {\bigpar{\bf#1:}\ %
             \setlength{\saveindent}{\parindent} %\setlength{\parindent}{0pt}%
                       \ignorespaces}%
\bigbreak \setlength{\parindent}{\saveindent}}
\medbreak \setlength{\parindent}{\saveindent}}
\newenvironment{remark*}%
      {\smallpar{\bf Remark:}\ %  **** \bigpar, and \bigbreak below ****
                       \ignorespaces}%
\medbreak \setlength{\parindent}{\saveindent}}
\medbreak \setlength{\parindent}{\saveindent}}
\newenvironment{remarks*}%
      {\smallpar{\bf Remarks:}\ % **** ditto ****
                       \ignorespaces}%
\medbreak \setlength{\parindent}{\saveindent}}
\medbreak \setlength{\parindent}{\saveindent}}
\medbreak \setlength{\parindent}{\saveindent}}
\newtheorem{theorem}{Theorem}%[section]    % Remove percent sign
\newtheorem{lemma}[theorem]{Lemma}
\newtheorem{example}{Example}
\def\begex{\begin{example}\parindent=0pt \rm}
\def\endex{\end{example}}
\def\square{\vbox{\hrule height.2pt\hbox{\vrule width.2pt height5pt \kern5pt
                                   \vrule width.2pt} \hrule height.2pt}}
\def\stopproof{\hfill \square \smallskip}
\def \path {{\mathcal P}}
\def \adj {{\,\approx\,}}
\def \U {{ \cal U}}
\def \tp {{\theta_{i,j} \path}}
\def\half{{\textstyle{1\over2}}}
\def\threequarters{{\textstyle{3\over4}}}
\def \r {{\bf R}}
\def \r {{ \cal F}}
\def \la {{\langle}}
\def \ra {{\rangle}}
\def \kt {{ {\hat K } }}
\def\r|{{\Bigr\vert}}
\def\l|{{\Bigl\vert}}
\def \kt {{\tilde K}}
\def\phi {\Phi}
\def \yt {{\tilde Y}}
\def\varepsilon{\mathchar"122 }
\def \chi {{\mathbf 1}}
\def\Gamma {{}}
\def\p {{ \mathbb P}}
\def\Sq{{\cal S}_q}
\def\Sq-{{\cal S}_{q-1}}
\def\iorj {{$i$-or-$j$ }}
\def \rr {{random-to-random insertion }}
\newcommand{\lab}{\label}
\newcommand{\be}{\begin{eqnarray}}
\newcommand{\ee}{\end{eqnarray}}
\begin{document}
\title{Improved bounds for the mixing time of the \rr shuffle}
\author{
{\sc Ben Morris}\thanks{Department of Mathematics,
University of California, Davis.
Email:
{\tt morris@math.ucdavis.edu}.
Research partially supported by
NSF grant CNS-1228828.}
\and
{\sc Chuan Qin}\thanks{Department of Mathematics,
University of California, Davis.
Email:
{\tt cqin@math.ucdavis.edu}.
}  }
\date{}
\maketitle
\begin{abstract}
\noindent
We prove an upper bound of $1.5324 n \log n$ for the mixing time of the \rr shuffle,
improving on the best known upper bound of $2 n \log n$. 
Our proof is based on the analysis of a non-Markovian coupling.
\end{abstract}

Key words: Markov chain, mixing time, non-Markovian coupling. 
%\thispagestyle{empty}
%\newpage
\setcounter{page}{1}
%-------------------------------------------------------------------------

%-------------------------------------------------------------------------
\section{Introduction} \lab{intro}
How many shuffles does it take to mix up a deck of cards?
Mathematicians have long been attracted to card shuffling problems.
This is partly because of their natural beauty,
and partly
because they provide a 
testing ground for the more general problem of finding the mixing time of a
Markov chain,
which has applications to 
computer science, statistical physics and optimization.

Let $X_t$ 
be a Markov chain on a finite state space $V$
that converges to the uniform distribution.
For probability measures $\mu$ and $\nu$ on $V$,
define the {\it total variation distance}
$|| \mu - \nu || = \half \sum_{x \in V} |\mu(x) - \nu(x) |$,     
and 
define the $\epsilon$-mixing time
\[
\label{mixingtime}
T_{\rm mix}(\epsilon) = \min \{n:   || \Pr(X_t =  \cdot) - \U ||  \leq \epsilon \mbox{ for all $x \in V$}\} \,,
\]
where $\U$ denotes the uniform distribution on $V$.

The \rr shuffle has the following transition 
rule. At each step choose a card uniformly at random, remove it from the deck and
then re-insert in to a random position. It has long been conjectured that 
the mixing time for the  \rr shuffle on $n$ cards 
exhibits {\it cutoff} at a time 
on the order of $n \log n$. That is, there is a constant $c$ such 
that for any $\epsilon \in (0,1)$, the $\epsilon$-mixing time 
%for the \rr shuffle on $n$ cards 
is asymptotic to 
$cn \log n$. 
%(That is,
%$\lim_{n \to \infty} a_n/b_n = 1$. )
It has further been conjectured (see \cite{D}) that  
the constant $c = \threequarters$.

Uyemura-Reyes \cite{U} proved a lower bound of $\half n \log n$. 
This was improved 
by Subag \cite{S}  
to the conjectured value of $\threequarters n \log n$. 
However, a matching upper bound has not been found. 
Diaconis and Saloff-Coste \cite{DS1} used comparison techniques to prove a
$O(n \log n)$ upper bound.  The constant was improved 
by Uyemura-Reyes \cite{U} and then 
by Saloff-Coste and Zuniga \cite{SZ}, 
who proved upper bounds of $4n\log n$ and $2n \log n$, respectively.
The main contribution of this paper is to improve the constant 
in the upper bound to $1.5324$. 
 We achieve this via a non-Markovian coupling that reduces the 
problem of bounding the mixing time to finding the second largest eigenvalue 
of a certain Markov chain on $9$ states. 
We also use the technique of path coupling (see \cite{BD}).

\section{Main result}
For sequences $a_n$ and $b_n$, we write 
$a_n \sim b_n$ if $\lim_{n \to \infty} {a_n \over b_n} = 1$ and
$a_n \lesssim b_n$ if $\limsup_{n \to \infty} {a_n \over b_n} \leq 1$.
Let $P$ be the transition matrix of the \rr shuffle.
Define
\begin{equation}
d(t) = \max_x || P^t(x, \, \cdot) - \U || \enspace.
\end{equation}
When the number of cards is $n$, we write $d_n(t)$ for the value of $d(t)$, and $T_{\rm mix}^{(n)}(\epsilon)$ for the $\epsilon$-mixing time of the \rr shuffle. Our main result is the following upper bound on $T_{\rm mix}^{(n)}(\epsilon)$. 
\begin{theorem}
\label{maintheorem}
For any $\epsilon \in (0,1)$ we have
$T_{\rm mix}^{(n)} (\epsilon) \lesssim 1.5324 n \log n$. 
\end{theorem}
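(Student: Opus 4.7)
The plan is to bound the mixing time via the coupling method, specifically via the path coupling framework of Bubley and Dyer \cite{BD}. In the natural adjacent-transposition graph on $S_n$, it suffices (under path coupling) to design a coupling $(X_t, Y_t)$ for two initial decks differing by a swap of two cards $a$ and $b$ in adjacent positions, and show the coupling coalesces in roughly $Cn\log n$ steps with $C \leq 1.5324$. If the expected distance contracts by a factor of $1 - \rho_n$ per step, then $T_{\rm mix}^{(n)}(\epsilon) \lesssim (1/\rho_n)\log n$, so the goal is to make $\rho_n$ as close to $1/n$ as possible with the right constant.

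The obvious Markovian coupling---pick the same card $c$ and the same insertion position $p'$ in both decks---does not contract the adjacent-transposition metric uniformly. When $c \neq a,b$ but $c$ is inserted strictly between $a$ and $b$ in one deck (equivalently, into the position of $b$ in $X$), the special cards $a$ and $b$ become separated by $c$ in one deck while remaining adjacent in the other; the path distance increases by one. This is why the bound obtained from the straightforward coupling is only $2 n \log n$. I would therefore, following the paper's approach, construct a \emph{non-Markovian} coupling where the choice of insertion position at time $t$ is a function of the history and is designed specifically to cancel such distance-inflating events---for instance, by deferring insertions of ``interfering'' cards or by steering them to compensating positions based on earlier moves.

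To analyze this coupling, the key idea is to identify a projection of the joint process $(X_t, Y_t)$ that, despite the coupling's non-Markovianness, itself evolves as a Markov chain on a small state space. I expect the state to record the relative geometry of $a$ and $b$---whether they are adjacent, separated by one interfering card, separated on the ``wrong'' side, etc.---together with a bit of auxiliary information about unresolved prior choices. The paper tells us that nine such states suffice. Enumerating the one-step transitions of this nine-state chain and computing its second-largest eigenvalue $\lambda_*$ (either in closed form or numerically) gives the contraction rate: the expected number of ``useful'' steps needed to exit the non-absorbing class is $1/(1-\lambda_*)$, and in the original time scale this becomes $n/(n(1-\lambda_*))$. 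The explicit constant $1.5324$ arises as $1/(n(1-\lambda_*))$ in the limit $n \to \infty$, possibly with a combinatorial prefactor counting how often each interfering event occurs.

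The hard part will be the design of the non-Markovian coupling itself: it must be simple enough for its projection onto the nine-state chain to actually be Markov, yet powerful enough to produce a contraction rate strictly better than the $(1-1/n)$-per-step rate underlying the $2 n \log n$ bound. Once that design is in place, the path-coupling conclusion (accounting for the diameter of $S_n$ in the adjacent-transposition metric, which is $O(n^2)$ and contributes only a logarithmic factor absorbed into $\log n$) and the eigenvalue estimate are comparatively routine; the former follows from the standard Bubley--Dyer argument and the latter from a direct $9 \times 9$ calculation.
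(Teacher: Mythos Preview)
Your high-level plan---path coupling plus a non-Markovian coupling whose analysis collapses to a finite Markov chain whose spectral gap yields the constant---matches the paper. But there is a concrete gap in your metric choice that, as written, would not give $1.5324$.

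You work in the \emph{adjacent-transposition} graph, whose diameter is $\binom{n}{2}=\Theta(n^2)$. In path coupling the diameter enters as $\log D$ in the mixing-time bound $(1/\rho)\log(D/\epsilon)$; with $\rho\sim a/n$ and $D\sim n^2$ you get $(2/a)\,n\log n\approx 3.065\,n\log n$, not $1.5324\,n\log n$. Your remark that the $O(n^2)$ diameter ``contributes only a logarithmic factor absorbed into $\log n$'' is exactly where this factor of $2$ slips through. The paper avoids this by declaring $x\approx x'$ whenever $x'=(i,j)x$ for an \emph{arbitrary} transposition; the diameter is then $n-1$, and the non-Markovian coupling is built for any pair $i,j$, not just cards in adjacent positions. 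If you want the stated constant you must either switch to this coarser metric or find an extra factor of~$2$ in the contraction, and nothing in your outline suggests the latter.

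Your picture of the nine-state chain is also off. The states do not encode the relative geometry of the two special cards; they encode the \emph{size} of a ``queue'' $Q_t\subseteq\{1,\dots,n\}$ of contaminated cards (cards that have been inserted next to a queue member since the last $i$-or-$j$ move), truncated to $\{0,1,\dots,7\}\cup\{\infty\}$. The coupling itself is not a steering or deferral rule: one takes a uniformly random $k$-path $\mathcal P$ for one chain, and for the other chain uses $\theta_{i,j}\mathcal P$, obtained by swapping the roles of $i$ and $j$ in every shuffle \emph{before} the last ``good time'' (a time at which the queue is guaranteed to collapse to a singleton). Coalescence happens precisely when a good time occurs before step $k$, and the probability that it does not is controlled by the second eigenvalue of the nine-state chain. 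The numerical value $a=0.6526$ is (minus) that eigenvalue in the $n\to\infty$ limit, and $1.5324\approx 1/a$.
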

We think of a permutation $\pi$ in $S_n$ as representing the order of a deck of $n$ cards, 
with $\pi(i) = \mbox{position of card $i$}$.  
Say $x$ and $x'$ are {\it adjacent}, and 
write $x \adj x'$, if $x' = (i,j) x$ for a transposition $(i, j)$. We prove the theorem using a path coupling argument (see \cite{BD}) and the following lemma.
\begin{lemma}
\label{mainlemma}
Suppose $a = 0.6526$. If $n$ is sufficiently large and 
$x$ and $x'$ are adjacent permutations in $S_n$,
then
\[
|| P^t(x, \, \cdot) - P^t(x', \, \cdot) || 
\leq 
e^{-at/n} \hspace{1.3cm} \mbox{for all $t>n\log n$}.
\]
\end{lemma}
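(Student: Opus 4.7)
The plan is to construct a non-Markovian coupling $(X_t, X'_t)$ of two random-to-random shuffles with $X_0 = x$ and $X'_0 = x'$, let $\tau = \min\{t : X_t = X'_t\}$, and apply the standard coupling inequality
\[
\|P^t(x,\cdot) - P^t(x',\cdot)\| \;\leq\; \mathbb{P}(\tau > t),
\]
so that Lemma~\ref{mainlemma} reduces to showing $\mathbb{P}(\tau > t) \leq e^{-at/n}$ for $t > n \log n$ with $a = 0.6526$. Since $x' = (i,j)x$, the two decks initially differ only in the positions of cards $i$ and $j$. A natural first design choice is: whenever a card $k \notin \{i,j\}$ is drawn (the choice of card being deck-independent), insert it at the same position in both decks. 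This keeps every background card perfectly synchronized and concentrates the discrepancy entirely in the positions of $i$ and $j$.

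The heart of the argument is to introduce a summary variable $S_t$ with values in a $9$-element set, designed so that under the coupling $S_t$ evolves as a Markov chain with a distinguished absorbing state corresponding to coalescence of $X_t$ and $X'_t$. A natural candidate cross-classifies the relative positions of card $i$ in $X$ versus $X'$ (less, equal, or greater) with the analogous classification for card $j$, giving $3 \times 3 = 9$ states with $(=,=)$ absorbing. The role of the non-Markovian features of the coupling---auxiliary randomness or short-term memory invoked on the steps that select $i$ or $j$---is to ensure both that $S_t$ is genuinely Markov on this small state space and that its transition structure yields as large a spectral gap as possible. Purely Markovian couplings yield only the $2n\log n$ bound of \cite{SZ}, so this refinement is where the improvement must come from.

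Once $S_t$ is set up, the remainder is essentially linear algebra: write down the $9 \times 9$ transition matrix $Q$, find its second-largest eigenvalue $\lambda_*$, and deduce that $\mathbb{P}(S_t \text{ not yet absorbed})$ decays at a rate that, after rescaling by the $1/n$ per-step timescale of the shuffle, gives a contraction factor $e^{-at/n}$ with $a = 0.6526$. The hypothesis $t > n\log n$ would then be used to absorb any polynomial-in-$n$ prefactors coming from the initial distribution of $S_t$ and from lower-order concentration estimates. The main obstacle I expect is the design step itself: finding a coupling rule on the ``interesting'' moves so that the reduced process is exactly Markov on only nine states while still having a spectral gap large enough to deliver the explicit constant $0.6526$. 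The subsequent eigenvalue computation, once the matrix is in hand, is a routine finite-dimensional exercise, and path coupling (as in \cite{BD}) then promotes this adjacent-pair estimate to the full mixing-time bound in Theorem~\ref{maintheorem}.
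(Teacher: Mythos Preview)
Your high-level plan---non-Markovian coupling, reduction to a finite Markov chain on nine states, eigenvalue computation---matches the paper's architecture, but the specific nine-state chain you propose is not the one that works, and the gap is exactly at the design step you flagged as the main obstacle.

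First, the claim that inserting a background card $k\notin\{i,j\}$ ``at the same position in both decks'' keeps all background cards synchronized is false under the natural coupling. In the random-to-random shuffle a move is parameterized as ``remove card $a$, reinsert next to card $b$.'' If you use the same $(a,b)$ in both decks and $b$ happens to be a card whose position already differs between the decks, then card $a$ acquires a discrepancy too. The set of out-of-sync cards can therefore grow, and the paper tracks exactly this: it defines a \emph{queue} $Q_t$ of contaminated cards and shows that the two decks differ only in the placement of the cards in $Q_t$. Your $3\times 3$ classification of the relative positions of $i$ and $j$ ignores this spreading and would not be Markov under any coupling that respects the shuffle's transition rule.

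Second, the nine states in the paper are not a $3\times 3$ grid at all: they are the queue sizes $\{0,1,\dots,7\}\cup\{\infty\}$, obtained by taking the (infinite-state) birth--death--reset chain for $|Q_t|$ and replacing transitions into state $8$ by transitions to an absorbing-at-the-top state $\infty$, with a monotonicity/stochastic-domination argument to justify the truncation. The constant $0.6526$ then comes from the second eigenvalue of the $n\to\infty$ limit of this truncated generator. Finally, the coupling itself is not a per-step rule but a \emph{bijection on entire paths}: one identifies the last ``good time'' $T$ (an $i$-or-$j$ move after which the queue will next be a singleton of the opposite card) and swaps the roles of $i$ and $j$ in all moves before $T$. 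Coalescence occurs iff $T<k$, reducing everything to a tail bound on $T$. None of these three ingredients---the queue, the truncation-plus-domination, and the path bijection---appears in your proposal, and each is essential to getting the stated constant.
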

The proof of Lemma \ref{mainlemma}, which uses a non-Markovian coupling, is deferred to Section \ref{nonmark}.

\begin{proofof}{Proof of Theorem \ref{maintheorem}}
By convexity of the $l^1$-norm, for any state $y$ we have
\begin{equation}
\label{tvbound}
|| P^t(y, \, \cdot) - \U || \leq 
\max_z || P^t(y, \, \cdot) - P^t(z, \, \cdot) || \enspace .
\end{equation}
Since any permutation in $S_n$ can be written as a product of 
at most $n-1$ transpositions, by the triangle inequality the quantity 
on the righthand side of (\ref{tvbound})
is at most 
\[
(n-1) \max_{x \adj x'} || P^t(x, \, \cdot) - P^t(x', \, \cdot) || \enspace .
\]
Let $a = 0.6526$. By Lemma \ref{mainlemma}, if the number of cards $n$ is sufficiently large we have
\begin{equation}\label{eq:maxx}
\max_x || P^t(x, \, \cdot) - \U || \leq (n - 1) e^{-a t/n} \enspace
\end{equation}
for all $t>n\log n$. Substituting $1.5324n\log n$ for $t$ in (\ref{eq:maxx}), we get
%Let  $a$ be a constant that  satisfies $(1.5324)^{-1}< a < 0.6526$.
%Since $a < 0.6526$, 
 \begin{eqnarray}
d_n( 1.5324  n \log n) 
&\leq&
(n - 1) e^{- 1.5324 a \log n} \\
&=& (n-1)/n^{1.5324  a}  \\
&\to& 0,   \hspace{2cm} n\rightarrow \infty,
\end{eqnarray}
since $1.5324 a > 1$. 

\end{proofof}

\section{Proof of Lemma \ref{mainlemma}}
\label{nonmark}
Recall that 
we think of a permutation $\pi$ in $S_n$ as representing the order of a deck of $n$ cards, 
with $\pi(i) = \mbox{position of card $i$}$.  
Let $M_{i,j}: S_n \to S_n$ 
be the operation on permutations that 
removes the card of label $i$ from the deck and re-inserts it 
\[
\left\{\begin{array}{ll}
\mbox{to the right of the card of label $j$} & \mbox{if $i \neq j$;} \\
\mbox{to the leftmost position} & \mbox{if $i = j$.} \\
\end{array}
\right.
\]
We call such operations {\it shuffles.}
If $\la M_1, \dots, M_k\ra$ is sequence of shuffles, 
we write $x M_1 M_2 \cdots M_l$ for $M_k \circ M_{k-1} \cdots M_1(x)$. 

The transition rule for the \rr shuffle can now 
be stated as follows. If the current state is $x$, choose 
a shuffle $M$ uniformly
at random (that is, choose $a$ and $b$ uniformly at random and let $M = M_{a,b}$) and move to $xM$. 

We call the numbers in $\{1, \dots, n\}$ {\it cards.}
If shuffle $M$ removes card $c$ from the deck and then re-inserts it, we
call $M$ a $c$-move. 

If $\path = \la M_1, M_2, \dots \ra$ is a sequence of shuffles, 
we write $ (\path x)_t$ for the permutation $x M_1 \cdots M_t$. Note that
if $\path$ is a sequence of independent uniform random shuffles, then 
$\{ (\path x)_t : t \geq 0 \}$ is the \rr shuffle started at $x$. 

\subsection{The coupling}
Fix a permutation $x$ and $i, j \in \{1, 2, \dots, n\}$. 
The aim of this subsection is to define a coupling of the \rr shuffle starting from 
$x$ and $(i,j) x$, respectively.

For positive integers $k$ 
we will call a sequence $\la M_1, \dots, M_k \ra$ of 
shuffles a {\it $k$-path}. 
For a $k$-path $\path$, 
define
the
{\it $\path$-queue} (or, simply the {\it queue}) as the following Markov chain $\{Q_t: t = 0, \dots, k\}$ on subsets of cards.
Initially, we have $Q_0 = \emptyset$. 
If the queue at time $t$ is $Q_t$,
and the shuffle at time $t$ is $M_{a,b}$, the next queue $Q_{t+1}$ is 
\[
\left\{\begin{array}{ll}
\{i\} & \mbox{if $a = j$;} \\
\{j\} & \mbox{if $a = i$;} \\
Q \cup \{a\} & \mbox{if $a \notin \{i, j\}$ and $b \in Q_t$.} \\
Q - \{a\} & \mbox{otherwise.} \\
\end{array}
\right.
\]
We call a shuffle an \iorj move if it is an $i$-move or a $j$-move.
For $t \leq k$, we call $t$ a {\it good time} if 
\begin{enumerate}
\item $t$ is an \iorj move;
\item there is a time $t' \in \{t+1, \dots, k\}$ 
such that
\begin{enumerate}
\item  $t'$ is the next \iorj move after $t$;
\item  the queue is a singleton at time $t' - 1$; 
\item the card moved at time $t'$ is different from the card moved at time $t$.
\end{enumerate}
\end{enumerate}
Let $T$ be the {\it last} good time in $\{1, \dots, k\}$, 
with $T = \infty$ if there are no good times, and
let $\theta_{i,j} \path$ be the $k$-path obtained from $\path$
by 
reversing the roles of $i$ and $j$ in each shuffle before time $T$ 
(that is, by replacing shuffle $M_{a,b}$ with $M_{\pi(a), \pi(b)}$, where $\pi$ is a transposition 
of $i$ and $j$). 
Note that $\theta_{i,j} \path$ 
has \iorj moves at the same times as $\path$. Furthermore, since the queue is 
reset at the times of \iorj moves, 
the $\theta_{i,j} \path$-queue 
will have the same values as the
$\path$-queue at all times $t \geq T$.
It follows that
the last good time of $\theta_{i,j} \path$ is the same as the last 
good time of $\path$, and 
hence $\theta_{i,j} ( \theta_{i,j}(\path)) = \path$.
Since $\theta_{i,j}$ is its own inverse, it 
is a bijection and hence if $\path$ is a uniform random $k$-path, then 
so is $\theta_{i,j} \path$. 

Let $x' = (i,j) x$. Let $\path$ be a uniform random $k$-path, and 
for $t$ with $0 \leq t \leq k$, define
\[
x_t = (\path x)_t \;\;\;\;\;\;\;
x'_t = ((\tp) x')_t \enspace.
\]
Let $T$ be the last good time of $\path$.
\begin{lemma}\label{lemma3}
If $T <k$ then $x_k = x'_k$.
\end{lemma}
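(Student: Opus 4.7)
The plan is to use two observations. First, $\theta_{i,j}$ reverses the roles of $i$ and $j$ only in shuffles \emph{strictly} before time $T$, so $\path$ and $\tp$ apply identical shuffles at every time $t\geq T$. Hence once we establish $x_t=x'_t$ at any time $t\geq T$, the equality propagates to all later times, and in particular to time $k$. Second, the good-time hypothesis supplies a natural candidate: the next $i$-or-$j$ move $T'$ after $T$ lies in $\{T+1,\ldots,k\}$, has $Q_{T'-1}$ a singleton, and moves a card different from the card moved at $T$. Thus it is enough to show $x_{T'}=x'_{T'}$.

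To obtain this equality I would prove by induction on $t\in\{T,\ldots,T'-1\}$ the following invariant: deleting the cards of $Q_t$ from both $x_t$ and $x'_t$ leaves the same ordered sequence of remaining cards. For the base case $t=T$, the $\theta_{i,j}$-reversal of shuffles strictly before $T$ preserves the relation $x'_{T-1}=(i,j)\,x_{T-1}$, so the two decks at time $T-1$ differ only in the positions of cards $i$ and $j$. The common shuffle at $T$, say an $i$-move $M_{i,b}$, re-inserts card $i$ to the right of card $b$ in both decks; since the sub-decks of $x_{T-1}$ and $x'_{T-1}$ obtained by deleting $\{i,j\}$ already coincide and card $b$ sits at the same position in each, the sub-decks of $x_T$ and $x'_T$ obtained by deleting $\{j\}$ also coincide, matching $Q_T=\{j\}$.

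For the inductive step ($T\leq t<T'-1$) the shuffle at $t+1$ must be non-$i$-or-$j$ (since $T'$ is by definition the next such move), so it has the form $M_{a,b}$ with $a\notin\{i,j\}$. I would then case-split on whether $a$ and $b$ belong to $Q_t$. In each of the four resulting cases, the queue update rule of the paper records precisely which cards must be excluded in order for the invariant to pass to $t+1$: when $b\in Q_t$, card $a$ joins the queue because the insertion point of $a$ may differ between the two decks (since $b$'s position can differ there); when $b\notin Q_t$, card $a$ instead leaves the queue because the insertion point is determined by a card whose position agrees in both decks, and $a$ ends up at the same position in each.

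To conclude, at time $T'-1$ the queue must be $\{j\}$: the card $j$ was added at $T$ and cannot leave via non-$i$-or-$j$ moves (which only add or remove cards outside $\{i,j\}$), while singletonness forces no other card to be present. The good-time condition then forces the shuffle at $T'$ to be the opposite $i$-or-$j$ move $M_{j,b^*}$. By the invariant, $x_{T'-1}$ and $x'_{T'-1}$ coincide once card $j$ is removed, so the insertion location to the right of $b^*$ is the same in each deck, and re-inserting $j$ yields the same permutation, giving $x_{T'}=x'_{T'}$; propagation then delivers $x_k=x'_k$. I expect the main obstacle to be the inductive step, in particular the case $a\in Q_t$, $b\notin Q_t$, where one must verify that card $a$ ends up at the same position in both decks so that the shrunk queue $Q_{t+1}=Q_t\setminus\{a\}$ still witnesses the sub-deck equality.
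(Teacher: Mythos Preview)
Your proposal is correct and follows essentially the same route as the paper's own proof: establish $x'_{T-1}=(i,j)x_{T-1}$ from the definition of $\theta_{i,j}$, then carry the invariant ``deleting $Q_t$ from $x_t$ and $x'_t$ yields identical sub-decks'' by induction over $t\in\{T,\ldots,T'-1\}$, deduce that $x_{T'-1}$ and $x'_{T'-1}$ differ only in the location of the single queued card, and conclude $x_{T'}=x'_{T'}$ after the $T'$-th shuffle, whence $x_k=x'_k$. Your write-up is in fact more detailed than the paper's, which simply asserts that ``the transition rule for the queue process ensures'' the inductive step without the four-way case split you outline.
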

\begin{proof}
Suppose that $T < k$.
Note that at any time $t < T$, the permutation 
$(\path x)_t$ can be obtained from $(\tp x')_t$ by interchanging 
the cards $i$ and $j$. Let $T'$ be the next \iorj move 
after time $T$. Without loss of generality, there is an $i$-move 
at time $T$ and a $j$-move at time $T'$. We claim that for times 
$t$ with $T \leq t < T'$, the permutation $x'_t$ can be obtained from 
$x_t$ by moving only the cards in $Q_t$, as shown in the diagram below.
(In the diagram, the $m$th $X$ in the top row represents the same 
card as the  $m$th $X$ in the bottom row, and $Q$ represents 
all the cards in $Q_t$, in any order.)
\[\begin{array}{ccccccccccc}
x_t:&X&X&X&X&X&X&Q&X&X&X\\
x'_t:&X&X&X&Q&X&X&X&X&X&X
\end{array}\]
To see this, note that it holds at time $T$, when the queue is the singleton $\{j\}$
(since at this time the $i$'s are placed in the same place), and the 
transition rule for the queue process ensures that if it holds 
at time $t$ then it also holds at time $t+1$. The claim 
thus follows by induction. This means that at time $T' - 1$ the permutations differ only 
in the location of card $j$. That is, they are of the form:
\[\begin{array}{ccccccccccc}
x_{T' - 1}:&X&X&X&X&X&X&j&X&X&X\\
x'_{T' - 1}:&X&X&X&j&X&X&X&X&X&X
\end{array}\]
Thus at time $T'$, when card $j$ is removed and then re-inserted into the deck, the 
two permutations become identical, and they remain identical until time $k$. 
\end{proof}
\begin{lemma}\label{lemma4}
Suppose $a =0.6526$. Then for sufficiently large $n$ and $k>n\log n$, we have $\p(T \geq k) \leq e^{-a k/n}$.
\end{lemma}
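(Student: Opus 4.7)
My plan is to bound $\Pr(T\ge k)$ via the spectral analysis of a finite Markov chain that tracks the queue. Since any good time $t$ requires a later \iorj move $t'\le k$, we have $T\le k-1$ whenever $T$ is finite, so $\{T\ge k\}=\{T=\infty\}$ is precisely the event that no good time occurs in $\{1,\dots,k\}$.

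Let $\tau_1<\dots<\tau_N$ be the times of the \iorj moves in $\{1,\dots,k\}$, so $N\sim\mathrm{Binomial}(k,2/n)$. Time $\tau_m$ (with $m<N$) is good iff (i) $|Q_{\tau_{m+1}-1}|=1$ and (ii) the cards moved at $\tau_m$ and $\tau_{m+1}$ differ; condition (ii) is an independent $\mathrm{Bernoulli}(1/2)$ flip, since each \iorj move picks $i$ or $j$ uniformly. A useful structural observation is that after the first reset the queue always contains exactly one of $\{i,j\}$, because a non-\iorj shuffle $M_{a,b}$ has $a\notin\{i,j\}$ and so only inserts or removes ordinary cards. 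Consequently the queue is encoded by its size $q_t\in\{1,2,\dots\}$, which performs a birth-death chain with per-step probabilities $\Pr(q\to q+1)\approx q/n$, $\Pr(q\to q-1)\approx (q-1)/n$, and reset-to-$1$ rate $2/n$.

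I would next enlarge this chain by an absorbing ``good'' state, entered at each reset from $q=1$ with conditional probability $1/2$ (and never from $q\ne 1$). Then $\Pr(T=\infty)$ equals the survival probability after $k$ steps of this sub-stochastic chain, which decays like $\mu_n^k$ where $\mu_n$ is its Perron eigenvalue; in the scaling $s=t/n$, one has $\mu_n=1-\rho/n+o(1/n)$, yielding $\Pr(T\ge k)\lesssim e^{-\rho k/n}$. Because the queue-size distribution has light tails, truncating the chain at $q=9$ with a conservative boundary condition---and invoking a stochastic-domination argument that only speeds up absorption---reduces the problem of lower-bounding $\rho$ to an eigenvalue problem on an explicit $9\times 9$ matrix. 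The lemma then follows from the numerical verification that this Perron eigenvalue is at least $a=0.6526$.

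The main obstacle is twofold: first, arranging the $9$-state truncation so that its Perron eigenvalue is a genuine \emph{lower} bound on the true decay rate (rather than merely an approximation), and second, computing that eigenvalue with enough precision to confirm the inequality $\rho\ge 0.6526$ on a fixed $9\times 9$ matrix. The hypothesis $k>n\log n$ is used only to absorb a bounded transient prefactor into the exponential, since $e^{-a n\log n/n}=n^{-a}$ dominates any $O(1)$ constant as $n\to\infty$.
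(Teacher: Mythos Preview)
Your plan is essentially the paper's own approach: track the queue size as a birth--death chain with resets, append an absorbing ``good'' state, truncate to a $9$-state chain, and read off the decay rate from the second-largest eigenvalue of an explicit matrix; the paper's truncation keeps states $\{0,1,\dots,7,\infty\}$ and sends would-be transitions $7\to 8$ to $\infty$ instead.

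There is, however, a genuine gap in the direction of your domination argument. You say you will truncate with a boundary condition that ``only speeds up absorption.'' That gives the inequality the wrong way. If the truncated chain absorbs \emph{faster} than the true chain, then its Perron decay rate $\rho_{\mathrm{trunc}}$ is an \emph{upper} bound on the true $\rho$, and verifying $\rho_{\mathrm{trunc}}\ge 0.6526$ tells you nothing about $\Pr(T\ge k)$. What you need is a truncation that makes absorption \emph{harder}, so that the truncated survival probability dominates the true one; then the truncated chain's decay rate is a valid lower bound on the true rate, and an upper bound on $\Pr(T\ge k)$ follows. The paper achieves this by redirecting the overflow transition $7\to 8$ to the state $\infty$ (from which the chain must wait for a fresh \iorj move to return to $1$), and then checks monotonicity of the chain to conclude that the modified process stochastically dominates the original. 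Your proposal as stated would, if carried out literally, prove the inequality in the wrong direction; you should replace ``speeds up absorption'' with a boundary that slows it down, and supply the monotone-coupling argument that justifies the domination.
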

\begin{proof}
Consider the Markov chain $Y_t$ defined as follows. 
The state space is $\{0, 1, \dots\} \cup \infty$. The chain starts 
in state $\infty$ and remains there until the first 
\iorj move. From this point on, the value of $Y_t$ is the size of the 
queue, until the first time that
either
\begin{enumerate}
\item card $i$ is moved when the queue is $\{i\}$, or
\item card $j$ is moved when the queue is $\{j\}$.
\end{enumerate}
At this point $Y_t$ moves to state $0$, which is an absorbing state.
Note that $T < k$ exactly when $Y_k = 0$.

For $l = 1, 2, \dots$, define
\[
q(l) = 
\left\{\begin{array}{ll}
{1 \over n} & \mbox{if  $l = 1$,} \\
\\
{3n - 1 \over n^2} & \mbox{if  $l = 2$,} \\
\\
{(l - 1)(n - l + 1) \over n^2} & \mbox{if $l \geq 3$;} \\
\end{array}
\right. 
\]
and define
\[
p(l) = 
\left\{\begin{array}{ll}
{n - 2 \over n^2} & \mbox{if  $l = 1$,} \\
\\
{2n - 6 \over n^2} & \mbox{if  $l = 2$,} \\
\\
{ l(n - l -1) \over n^2}        & \mbox{if $l \geq 3$.} \\
\end{array}
\right. 
\]
The transition rule for $Y_t$ can be described 
as follows. 
If the current state is $0$, the next state is $0$.
If the current state is 
$\infty$ the next state is 
\[
\left\{\begin{array}{ll}
1 & \mbox{with probability ${2 \over n}$;} \\
\\
\infty & \mbox{with probability ${n -2 \over n}$.} \\
\end{array}
\right. 
\]
If the current state is $l \in \{1, 2, \dots\}$,
the next state is
\[
\left\{\begin{array}{ll}
l - 1 & \mbox{with probability $q(l)$;} \\
\\
{l + 1}        & \mbox{with probability $p(l)$;} \\
\\
{1}        & \mbox{with probability ${2 \over n}$, if $l \geq 3$;} \\
\\
l & \mbox{with the remaining probability.} \\ 
\end{array}
\right.  
\]
Let $\yt_t$ be the Markov chain 
on $\{0, 1, \ldots, 7 \} \cup \infty$
obtained from $Y_t$ by replacing transitions to state $8$ with 
transitions to $\infty$. That is, if $K$ and $\kt$ denote the 
transition matrices of $Y_t$ and $\yt_t$, respectively, then
\[
\kt(l, m) = 
\left\{\begin{array}{ll}
K(l, m) & \mbox{if $m \in \{0,1,\ldots,7\}$;} \\
\\
K(7,8)        & \mbox{if $l = 7$ and $m = \infty$.} \\
\end{array}
\right.  
\]

The possible transitions of $Y_t$ and $\yt_t$ are indicated by the graph in Figure \ref{graph}.
We claim that if we start with $\yt_0 = Y_0 = \infty$ then the distribution of $\yt_t$ stochastically 
dominates the distrbution of $Y_t$ for all $t$. 
To see this, note that $Y_t$ changes state with probability less than 
$\half$ at each step, and when it changes state, it either makes a $\pm 1$ move 
or it transitions to $1$. Since for $m \in \{1, 2, \dots\} \cup \infty$, 
the transition probability $K(m, 1)$ is decreasing in $m$, it follows that
$Y_t$ is a monotone chain. The claim follows since 
$\yt_t$ is obtained from $Y_t$ by replacing moves to $8$ with moves 
to the (larger) state of $\infty$.
\begin{figure}%[t]
\centering
\begin{tikzpicture}[xscale = 2, yscale = 2] 
\node at (-4, 0) [circle, draw, very thick] (b0) {$0$};
\node at (-3, 0) [circle, draw, very thick] (b1) {$1$};
\node at (-2, 0) [circle, draw, very thick] (b2) {$2$};
\node at (-1, 0) [circle, draw, very thick ] (b3) {$3$};
\node at (0, 0) [circle, draw, very thick] (b4) {$4$};
\node at (1, 0) [circle, draw, very thick] (b5) {$5$};
\node at (2, 0) [circle, draw, very thick] (b6) {$6$};
\node at (3, 0) [circle, draw, very thick] (b7) {$7$};
\node at (4, 0) [circle, draw, very thick] (b8) {$8$};
\node at (3.5, -1) [circle, draw, very thick] (infinity) {$\infty$};
\node at (4.5, 0) {\huge $\dots$};
\draw[very thick, <-] (b0) -- (b1);
\draw[very thick, <->] (b1) -- (b2);
\draw[very thick, <->] (b2) -- (b3);
\draw[very thick, <->] (b3) -- (b4);
\draw[very thick, <->] (b4) -- (b5);
\draw[very thick, <->] (b5) -- (b6);
\draw[very thick, <->] (b6) -- (b7);
\draw[very thick, <->][color = blue] (b7) -- (b8);
\draw[very thick, ->] (b3) to [bend right = 20] (b1); % node [midway] {$2/n$};
\draw[very thick, ->] (b4) to [bend right = 30] (b1);
\draw[very thick, ->] (b5) to [bend right = 40] (b1);
\draw[very thick, ->] (b6) to [bend right = 50] (b1);
\draw[very thick, ->] (b7) to [bend right = 60] (b1);
\draw[very thick, ->] (b8) to [bend right = 70] (b1);
\draw[very thick, ->] (infinity) to [bend left = 10] (b1);
\draw[very thick, color = red, ->] (b7) to (infinity);
%\draw[very thick][color = blue] (b4) -- (b5);
\end{tikzpicture}
\caption{Graph indicating the possible transitions of $Y_t$ and $\yt_t$.
(The blue edge indicates a possible transition of $Y_t$ and the red edge indicates a possible transition of $\yt_t$.)}\label{graph}
\end{figure}
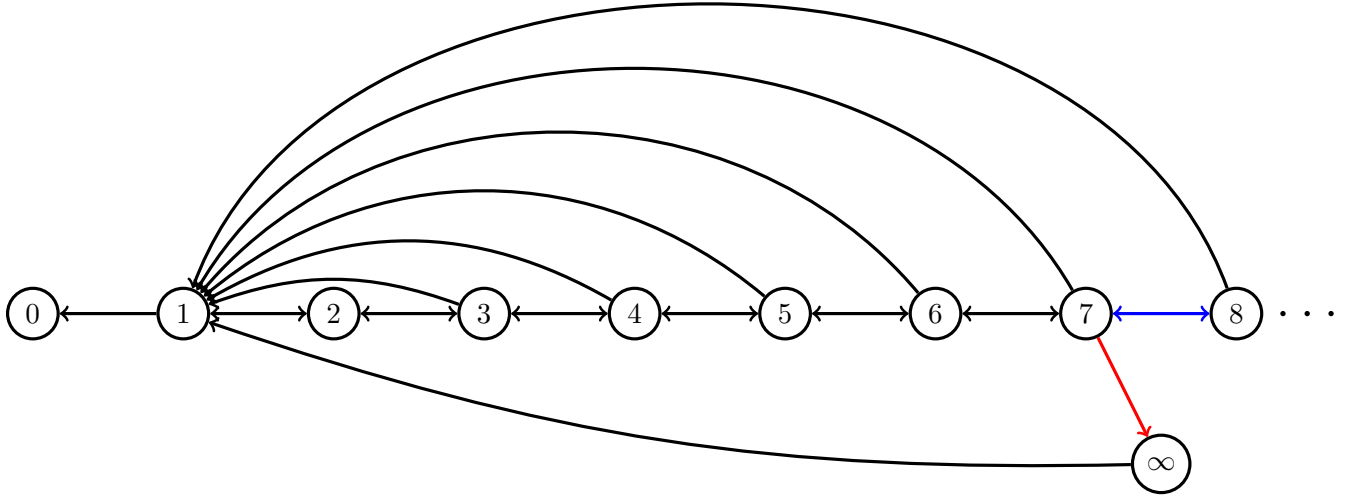

Let $\kt_n$ be the value of the matrix $\kt$ when 
the number of cards is $n$. Define $B_n = \kt_n - I$,
where $I$ is the identity matrix. 
A straightforward calculation shows that 
$n B_n \to C$ as $n\rightarrow \infty$, where
\[
C = \begin{bmatrix} 
0 & 0 & 0 & 0 & 0 & 0  & 0 & 0 & 0 \\ 
1 & -2 & 1 & 0 & 0 & 0  & 0 & 0 &  0  \\ 
0 & 3  & -5 & 2 & 0 & 0 & 0 & 0 & 0    \\ 
0 & 2  & 2 & -7 & 3 & 0  & 0 & 0 & 0  \\ 
0 & 2  & 0 & 3 & -9 & 4  & 0  & 0 & 0  \\ 
0 & 2  & 0 & 0 & 4 & -11 & 5 & 0  & 0  \\ 
0 & 2  & 0 & 0 & 0 &  5  & -13 & 6 & 0\\
0 & 2  & 0 & 0 & 0 &  0  & 6  &-15 & 7\\
0 & 2  & 0 & 0 & 0 &  0 & 0   & 0 & -2
                    \end{bmatrix}
\]
For matrices $A$ we write $\lambda(A)$ for the \emph{second largest} eigenvalue of $A$. By the relationship between $l^2$-norm and eigenvalues, we have
\begin{eqnarray}
\left( \p(\tilde{Y}_k = \infty)^2 +
\sum_{l = 1}^7 \p(\tilde{Y}_k = l)^2  \right)^{1/2}
&\leq& \left(\lambda( \kt_n) \right)^k    \\
&=& [1 + \lambda(B_n)]^k \\
\label{first}
&\leq& \exp\left(  k \lambda(B_n)          \right).
\end{eqnarray}
Since $Y_t$ is stochastically 
dominated by $\yt_t$,
\begin{equation}
\label{third}
\p(Y_k>0)\leq \p(\tilde{Y}_k = \infty) + \sum_{l = 1}^7 \p(\tilde{Y}_k = l) \leq  \sqrt{8} \left( \p(\tilde{Y}_k = \infty)^2 +
\sum_{l = 1}^7 \p(\tilde{Y}_k = l)^2  \right)^{1/2},
\end{equation}
where the second inequality is by Cauchy-Schwarz. Let $J_n$ denote the matrix obtained by deleting the first column and the first row from $I+(1/16)(nB_n)$, and note that $1+\lambda(nB_n)/16$ is the largest eigenvalue of $J_n$. Since $J_n$ is a sub-stochastic matrix for all $n$, it is well known that the largest eigenvalue of $J_n$ converges to the largest eigenvalue of the entry-wise limit of $J_n$ as $n\rightarrow \infty$. This implies that $\lim_{n\rightarrow \infty} \lambda(nB_n) = \lambda(C)$. Numerical calculations show that $\lambda(C) < a:= -0.6526$. Therefore, there exists some constant $\delta>0$ such that
\[
k \lambda(B_n) = {k \over n} \lambda(n B_n)\leq  -(a+\delta) k/n
\]
for sufficiently large $n$. This combined with (\ref{first}) and (\ref{third}) proves that 
\[
\p(T\geq k) = \p(Y_k>0)\leq \sqrt{8} \exp(-(a+\delta) k/n) \leq \exp(-a k/n)
\]
for sufficiently large $n$ and $k>n\log n$.
\end{proof}

\begin{proofof}{Proof of Lemma \ref{mainlemma}}
Recall that for any two probability measures $\mu$ and $\nu$ on a probability space $\Omega$, we have
\[
\|\mu-\nu\|=\min\{\p(X\neq Y): \mbox{$(X,Y)$ is a coupling of $\mu$ and $\nu$} \}.
\]
The main lemma then follows immediately from Lemma \ref{lemma3} and Lemma \ref{lemma4}.
\end{proofof}

%\noindent{\bf{Acknowledgement.}} 


\begin{thebibliography}{99}
\bibitem{BD} R. Bubley and M. Dyer, Path Coupling: A technique for proving rapid 
mixing in Markov Chains, Proceedings of the 38th Annual Symposium on Foundation of Computer Science, pp. 223-231, 1997.
\bibitem{D} P. Diaconis, Mathematical developments from the analysis of riffle shuffling, 
In Groups, Combinatorics, Geometry (Durham 2001), 73-97, 2001.
%\bibitem{DS} P. Diaconis and M. Shahshahani, Generating a random permutation with random 
%transpositions, Z. Wahr. verw. Gebiete, 57 159-179 (1981).
\bibitem{DS1} P. Diaconis and L. Saloff-Coste, Comparison techniques for random walks on finite groups,
 Ann. Probab. 21, 2131-2156, 1993.
\bibitem{LPW} D. Levin, Y. Peres and E. Wilmer, Markov Chains and mixing time,
American Mathematical Society, Providence, RI, 2009. With a chapter by James G. Propp and David B. Wilson.
%\bibitem{M} I. Mironov, (Not So) Random Shuffles of RC4, Proceedings of CRYPTO 2002, 304??C319.
%\bibitem{MPS} E. Mossel, Y. Peres and A. Sinclair, Shuffling by semi-random transpositions, 
%Proceedings of the 45th Annual IEEE Symposium on Foundations of Computer Science(FOCS'04) 
%October 17-19, 2004, Rome, Italy, 572-581, IEEE(2004).
%\bibitem{P} R. Pinsky, Probabilistic and Combinatorial Aspects of the Card-Cyclic to Random Shuffle, preprint, 2011.
\bibitem{S} E. Subag, A Lower Bound for the Mixing Time of the Random-to-Random Insertions Shuffle, Electron. J. Probab. 18, 1--20, 2012.
\bibitem{SZ} L. Saloff-Coste and J. Zuniga, Refined estimates for some basic random walks on the symmetric and alternating groups, Latin American Journal of Probability and Mathematical Statistics 4, 359-392, 2008.
\bibitem{SZ1} L. Saloff-Coste and J. Zuniga, Convergence of some time inhomogeneous Markov chains via spectral techniques. Stochastic Processes and their Applications 117, 961-979, 2007.
\bibitem{U} J. Uyemura-Reyes, Random Walk, semi-direct products, and card shuffling, Ph.D. Thesis, Stanford University, 2002.
\end{thebibliography}
\end{document}